\pgfplotsset{compat=1.18}
\newtheorem{theorem}{Theorem}[section]
\newtheorem{lemma}[theorem]{Lemma}
\newtheorem{definition}[theorem]{Definition}
\newtheorem{proposition}[theorem]{Proposition}
\newtheorem{example}[theorem]{Example}
\newtheorem{question}[theorem]{Question}
\numberwithin{equation}{section}
\newcommand{\seq}[1]{\langle #1\rangle}
\newcommand{\ns}{\varnothing}
\begin{document}




\title[Neighborhood $N$-Shadowing]{Shadowing with small diameter sets of bounded cardinality}

\author[J. Meddaugh]{Jonathan Meddaugh}
\address[J. Meddaugh]{Department of Mathematics, Baylor University, Waco TX, 76798}
\email[J. Meddaugh]{Jonathan\_Meddaugh@baylor.edu}

\author[E. Stephens]{Elyssa Stephens}
\address[E. Stephens]{Department of Mathematics, Baylor University, Waco TX, 76798}
\email[E. Stephens]{Ellie\_Stephens2@baylor.edu}

\thanks{The first author was supported by a grant from the Simons Foundation (960812, JM)}

\subjclass[2000]{54H20, 37B10, 37C50}
\keywords{neighborhood $N$-shadowing, shadowing, mixing, sofic, specification, subshift of quasi-finite type}
\date{}

\begin{abstract} We examine dynamical systems with the property that pseudo-orbits can be traced by small diameter sets with bounded cardinality. In particular, we show that mixing sofic subshifts and surjective dynamical systems with the specification property have this property, and in these systems, it is sufficient to consider small sets of cardinality no more than two. We also prove that a more general class of subshifts, the \textit{subshifts of quasi-finite type}, exhibit this property. 
	
\end{abstract}
\maketitle

\label{sec:intro}
\section{Introduction}

 For a continuous function $f$ on a compact metric space and $\delta >0$, a $\delta$-pseudo-orbit for the system $(X,f)$ is a sequence $(x_i)_{i\in \omega}$ in $X$ such that $d(f(x_i),x_{i+1})<\delta$ for all $i\in \omega$. Pseudo-orbits are of significant import in modeling dynamical systems---in particular, they are the natural output of finite-precision methods of orbit computation in a system $(X,f)$. Systems in which pseudo-orbits are good approximations of true orbits are, therefore, quite useful \cite{shadowingasastructualproperty}. Such systems are said to have the shadowing property (sometimes known as the pseudo-orbit tracing property). In addition to this import to modeling, shadowing has strong applications in the broader theory of dynamical systems. In particular, one of the earliest definitions of the shadowing property can be accredited to Bowen and his studies on non-wandering sets in Axiom $A$ diffeomorphisms \cite{bowenshadowintro}. 

Since then, many variations of the shadowing property have been introduced and studied. In \cite{orbitalshadowingICT} the authors introduce the \textit{eventual shadowing property} for which pseudo-orbits are shadowed after some length of time. The \textit{unique shadowing property} and its relationship to expansivity is discussed in \cite{expansivityanduniqueshadow}. Other shadowing properties include \textit{limit shadowing} \cite{eirola1997limit}, \textit{$\underline{d}$-shadowing} \cite{subshadowing}, and \textit{thick shadowing} \cite{subshadowing}. In systems with certain dynamical properties, various shadowing properties are also shown to be equivalent. For example, in chain transitive systems \cite{BRIAN_MEDDAUGH_RAINES_2015}. Also, in systems with shadowing, various dynamical properties are known to be equivalent \cite{anoteonaverageshadowing}.

Of particular import to this paper is the notion of using a finite set, rather than a single point, to shadow a pseudo-orbit. This idea was originally introduced in \cite{multishadowing}, called the \emph{multishadowing property}. Systems with multishadowing have the property that pseudo-orbits are shadowed by a finite set of cardinality dependent upon the pseudo-orbit. While many dynamical systems on a compact metric space have the multishadowing property but not the shadowing property (e.g. the constant map on $[0,1]$), the authors in \cite{n-shadowing} introduced a stronger form of shadowing in which the shadowing set has cardinality that is not dependent on the pseudo-orbit. In particular, if pseudo-orbits for a system $(X,f)$ can be shadowed by sets of cardinality no more than $N$, $(X,f)$ is said to have the \emph{$N$-shadowing property}. In this paper, we expand upon this idea and introduce a stronger property, which we call the \emph{neighborhood $N$-shadowing property}, requiring the shadowing set to be small not only in cardinality, but also in diameter. Briefly, a system has \emph{neighborhood $N$-shadowing} if for every $\varepsilon>0$ there exists a $\delta>0$ such that every $\delta$-pseudo-orbit is $\varepsilon$-shadowed by a finite set $A \subseteq X$ with $|A|\leq N$ and $diam(A)<\varepsilon.$

The structure of the paper is as follows. In Section \ref{sec:preliminaries}, we introduce key definitions and concepts, including that of \emph{neighborhood $N$-shadowing}. In Section \ref{sec:NeighborhoodSofic}, we demonstrate that a large class of subshifts exhibit neighborhood $2$-shadowing but not shadowing. In Section \ref{sec:generalnshadow}, we explore connections between neighborhood $N$-shadowing, mixing, and specification. We close with some open questions and discussion in Section \ref{sec:conclusion}.

\section{Preliminaries} \label{sec:preliminaries}

For the purposes of this paper, a \emph{dynamical system} is a pair, $(X,f)$,  where $X$ is a compact metric space and $f:X\to X$ is a continuous map. The \emph{orbit} of a point $x\in X$ is the sequence $(f^n(x))_{n\in \omega}$ in $X$, where $\omega=\mathbb N\cup \{0\}$. A point $x\in X$ is \emph{periodic} if there exists $p\in \mathbb{N}$ such that $f^p(x)=x$. 

Let $(X,f)$ be a dynamical system and $\delta>0$. A \emph{$\delta$-pseudo-orbit} is a sequence $(x_i)_{i\in \omega} $ in $X$ such that $d(f(x_i), x_{i+1})<\delta $ for all $i\in \omega.$ A dynamical system $(X,f)$ has the \emph{shadowing property} provided that for every $\varepsilon>0$ there exists $\delta>0$ such that for every $\delta$-pseudo-orbit, $(x_i)_{i\in \omega}$, there exists a point $z\in X$ such that $d(f^i(z),x_i)<\varepsilon$ for all $i\in \omega.$ In this case, the $\delta$-pseudo-orbit is said to be $\varepsilon$-\emph{shadowed} by $z$.

In systems $(X,f)$ in which $f$ is a homeomorphism, we often wish to discuss analogous bi-directional notions. In particular, in this case, the \emph{two-sided orbit} of $x\in X$ is the bi-infinite sequence $(f^i(x))_{i\in\mathbb{Z}}$, and a \emph{two-sided $\delta$-pseudo-orbit} is a sequence $(x_i)_{i\in\mathbb Z}$ such that $d(f(x_i),x_{i+1})<\delta$ for all $i\in\mathbb Z$. The system $(X,f)$ has the \emph{two-sided shadowing property} provided that for each $\varepsilon>0$, there exists $\delta>0$ such that  for every two-sided $\delta$-pseudo-orbit, $(x_i)_{i\in \mathbb Z}$, there exists a point $z\in X$ such that $d(f^i(z),x_i)<\varepsilon$ for all $i\in \mathbb Z.$ In this case, the two-sided $\delta$-pseudo-orbit is said to be $\varepsilon$-\emph{shadowed} by $z$.

The principal object of study in this paper is a weaker notion of shadowing in which we allow for a small \emph{set} of points to shadow pseudo-orbits. 

\begin{definition}
	Let $N\in\mathbb N$. A dynamical system $(X,f)$ has the \emph{neighborhood $N$-shadowing property} provided that for each $\varepsilon>0$ there exists $\delta >0$ such that for every $\delta$-pseudo-orbit $(x_i)_{i\in\omega}$, there exists $A\subseteq X$ with $|A|\leq N$ and $diam(A)<\varepsilon$ such that $d(f^i(A),x_i) <\varepsilon$ for all $i\in\omega$. In this case, we say that $(x_i)_{i\in\omega}$ is $\varepsilon$-shadowed by $A$.
\end{definition}

It is worth pointing out the following elementary observations: $1$-shadowing is equivalent to the standard notion of shadowing and that if $N\leq M$, then neighborhood $N$-shadowing implies neighborhood $M$-shadowing.

The notion of neighborhood $N$-shadowing is inspired by the $N$-shadowing property, which was introduced in \cite{n-shadowing}. Fix $N\in \mathbb{N}$. A dynamical system $(X,f)$ has the \emph{$N$-shadowing property} if for every $\varepsilon>0$, there exists $\delta>0$ such that for every $\delta$-pseudo-orbit $(x_i)_{i\in \omega}$, there exits a set $A\subseteq X$ with $|A|\leq N$ such that $d(f^i(A), x_i)<\varepsilon$ for all $i\in\omega.$ 

It is immediately clear that neighborhood $N$-shadowing implies $N$-shadowing. The following example (which appears in \cite{n-shadowing}) demonstrates that the converse is false, i.e. neighborhood $N$-shadowing is strictly stronger than $N$-shadowing.

\begin{example} Let $f:[0,1]\to[0,1]$ be the function indicated below.
	\begin{multicols}{2}
		\begin{minipage}{.5\textwidth}
			$$f(x)= \begin{cases}
				\displaystyle\frac{1}{2}\sqrt{2x} & 0\leq x <\displaystyle \frac{1}{2}\\\\
				\displaystyle \frac{1}{2}+ \displaystyle\frac{1}{2}\sqrt{2x-1} & \displaystyle \frac{1}{2} \leq x \leq 1 
			\end{cases}$$
		\end{minipage}\columnbreak
		\begin{minipage}{.5\textwidth}
			
			\begin{tikzpicture}[scale=.6]
				\begin{axis}[
					axis lines=middle,
					axis line style={-},  
					enlargelimits=false,  
					domain=0:1,
					samples=100,
					xtick={1},
					ytick={1},
					xticklabel={1},
					yticklabel={1},
					xlabel={\empty},
					ylabel={\empty},
					yticklabel style={xshift=-.2cm}, 
					xticklabel style={yshift=-.2cm}, 
					tick style={major tick length=0pt, minor tick length=0pt},
					grid=both,
					grid style={draw=black},
					axis on top
					]
					\addplot[domain=0:.5,thick] {(1/2)*sqrt(2*x)};
					\addplot[domain=.5:1,thick] {0.5+(1/2)*sqrt(2*x-1)};
					\addplot[domain=0:1] {x};
					
					\node at (axis cs:.2,.5) {$f$};
					\node at (axis cs:.25,.1) {$y=x$};
				\end{axis}
			\end{tikzpicture}
			
		\end{minipage}
	\end{multicols}
	It was shown in \cite{n-shadowing} that $f$ has 2-shadowing, but not shadowing. To see that it also does not have neighborhood $N$-shadowing for any $N$, fix $\varepsilon=\frac{1}{4}$. It is easy to check that for any $\delta>0$, there are $\delta$-pseudo-orbits $(x_i)_{i\in\omega}$ with $x_0=0$ and $x_K=1$ for sufficiently large $K$. However, if $diam(A)<\varepsilon$ and $A$ contains a point within $\varepsilon$ of $0$, then $A\subseteq [0,2\varepsilon)\subseteq [0,1/2)$ and $d(1,f^i(A))>1/2$ for all $i\in\omega$, so $A$ does not $\varepsilon$-shadow any such pseudo-orbit.
\end{example}

A dynamical system $(X,f)$ has the \emph{specification property} if for every $\varepsilon >0$ there exists a positive integer $M>0$ such that for any $s\geq 2$, any finite collection of points $\{y_1,y_2,...,y_s\}\subseteq X$ and sequence of positive integers $0\leq j_1\leq k_1<j_2\leq k_2<\cdot\cdot\cdot < j_s\leq k_s$ with $j_{m+1}-k_m \geq M $ for all $1\leq m \leq s-1, $ there exists a point $x\in X$ such that $d(f^i(x),f^i(y_m))< \varepsilon$ for all $j_m\leq i\leq k_m$ and $1\leq m\leq s$. It is worth pointing out that some authors require the point $x$ to have period $M+k_s$, but we make no such restriction in this paper. 

A dynamical system $(X,f)$ is \emph{transitive} if for any pair of nonempty open sets $U,V\subseteq X$, there exists $n\in \mathbb{N}$ such that $f^n(U)\cap V\neq \emptyset.$ The system is \emph{weakly mixing} if for all nonempty open sets $U_1,U_2,V_1,V_2\subseteq X$, there exists $n\in \mathbb{N}$ such that $f^n(U_1)\cap V_1\neq \emptyset$ and $f^n(U_2)\cap V_2\neq \emptyset$. The system is \emph{mixing} if for every pair of nonempty  open subsets $U,V\subseteq X$, there exists $N\in \mathbb{N}$ such that for all $n\geq N$, $f^n(U)\cap V\neq \emptyset.$ It is well-known that specification is strictly stronger than mixing, which is strictly stronger than transitivity \cite{vries2014topological}.

 For a finite set $\mathcal{A}$, consider the set $\mathcal A^\mathbb Z$ of bi-infinite sequences in $\mathcal A$. For $x=(x_i)_{i\in\mathbb Z}\in\mathcal A^\mathbb Z$ and integers $i<j$, we define $x_{[i,j)}=x_ix_{i+1}\ldots x_{j-1}$ and $x_{[i,j]}=x_{[i,j+1)}$. If $w=x_{[i,j)}$ for some $i<j$, then we say that $w$ is a \emph{subword} of $x$. If $u$ and $v$ are subwords of elements $\mathcal A^\mathbb Z$, then we understand $uv$ to be the concatenation of the two words. 
 
 The \emph{full shift with alphabet $\mathcal A$} is the dynamical system $(\mathcal{A}^\mathbb{Z},\sigma)$ where $\sigma$ is the \emph{shift map} defined by $\sigma: \mathcal{A}^\mathbb{Z}\rightarrow \mathcal{A}^\mathbb{Z}$ defined by $(\sigma(x))_i = x_{i+1}$ and $\mathcal A^\mathbb Z$ is endowed with the metric
 \[d(x,y) = \inf\{2^{-i}: x_{[-i,i]}\neq y_{[-i,i]}\}.\]
 Since there is no ambiguity in the map under discussion, we will often refer $\mathcal A^\mathbb Z$ as the full shift, omitting mention of $\sigma$. The \emph{forward shift with alphabet $\mathcal A$} is defined analogously, with  $\mathcal A^\omega$ replacing $\mathcal A^\mathbb Z$. All notation/terminology/results below apply equally well to forward shifts.

  A \emph{subshift of $\mathcal A^\mathbb Z$} is a shift-invariant, closed subsystem $(X,\sigma)$ of the full shift $\mathcal A^\mathbb Z$.  For a subshift $X$ (again, as there is no ambiguity in the map, we will sometimes use refer to a subshift by its domain, omitting $\sigma$) and $n\in\mathbb N$, we define $\mathcal B_n(X)=\{x_{[i,i+n)}:x\in X\}$ to be the set of \emph{allowable words of length $n$ in $X$} and $\mathcal B(X)=\bigcup_{n\in\mathbb N}B_n(X)$ to be the \emph{language} of $X$. 
  For each subshift $X$ of $\mathcal A^\mathbb Z$, there exists a set $\mathcal F$ of \emph{forbidden words} such that $X=\{x\in\mathcal A^\mathbb Z: \textrm{for all } i<j, x_{[i,j)}\notin \mathcal F\}$. If the subshift $X$ has a finite set of forbidden words that defines it, it is called a \emph{subshift of finite type}. Subshifts of finite type are characterized as exactly those subshifts with the shadowing property \cite{Walters1978}.

\emph{Sofic subshifts} are subshifts that can be realized using a labeled graph in the following sense. A subshift $X$ of $\mathcal A^\mathbb Z$ is \emph{sofic} provided that there exists a directed graph $G$ (with edge set $\mathcal{E}$ and vertex set $\mathcal{V}$) and a \emph{labeling} $\mathcal L:\mathcal E\to\mathcal A$, such that $X$ is the set consisting of the images under $\mathcal L$ of the bi-infinite (edge) walks in $G$.

We close this section by stating the following well-known result concerning mixing sofic subshifts.

 \begin{proposition} \label{soficmixingnumber}
 	A sofic subshift $X$ is mixing if and only if there exists $M\in\mathbb N$ such that for all $u,v\in\mathcal B(X)$ and $n\geq M$, there exists $w\in\mathcal B_n(X)$ such that $uwv\in \mathcal B(X)$.
 \end{proposition}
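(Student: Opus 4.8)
The plan is to prove the two implications separately. Only the implication ``mixing $\Rightarrow$ uniform gluing'' will use soficity; the converse holds for an arbitrary subshift.

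\emph{Uniform gluing $\Rightarrow$ mixing.} Fix $M$ as in the statement and let $U,V\subseteq X$ be nonempty open. Since cylinder sets form a clopen basis, I would choose words $u,v\in\mathcal B(X)$ of a common length $2k+1$ whose cylinders on the coordinate block $[-k,k]$ lie inside $U$ and $V$, respectively. Given $n\geq M+2k+1$, apply the hypothesis with the length $\ell=n-(2k+1)\geq M$ to obtain $w\in\mathcal B_\ell(X)$ with $uwv\in\mathcal B(X)$. As $uwv$ is an allowable word it occurs in some point of $X$, and after a shift there is $z\in X$ with $z_{[-k,k]}=u$ and $(\sigma^n z)_{[-k,k]}=v$; hence $z\in U$ and $\sigma^n z\in\sigma^n(U)\cap V$. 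So $\sigma^n(U)\cap V\neq\emptyset$ for every $n\geq M+2k+1$, which is mixing.

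\emph{Mixing $\Rightarrow$ uniform gluing.} Here I would argue from the definition of soficity. Present $X$ by a finite labeled graph $(G,\mathcal L)$ with vertex set $\mathcal V$, and after deleting every vertex and edge lying on no bi-infinite walk, assume $G$ is \emph{essential}: every vertex has positive in- and out-degree, every finite walk extends to a bi-infinite one, and $\mathcal B_n(X)$ is exactly the set of labels of length-$n$ walks in $G$. For $u\in\mathcal B(X)$ let $T(u)\subseteq\mathcal V$ denote the set of terminal vertices of walks labeled $u$, and $I(u)\subseteq\mathcal V$ the set of initial vertices; both are nonempty. The key elementary fact is: for $u,v\in\mathcal B(X)$ and $n\in\mathbb N$, there exists $w\in\mathcal B_n(X)$ with $uwv\in\mathcal B(X)$ if and only if some $b\in T(u)$ is joined to some $c\in I(v)$ by a walk of length $n$ in $G$. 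For the ``if'' direction one takes $w$ to be the label of such a walk and extends the concatenation $p\xrightarrow{u}b\xrightarrow{w}c\xrightarrow{v}q$ to a bi-infinite walk; for ``only if'' one reads a walk labeled $uwv$ off in three consecutive blocks of lengths $|u|$, $n$, $|v|$. Soficity now enters solely through finiteness of $\mathcal V$: only finitely many pairs of subsets of $\mathcal V$ arise as $(T(u),I(v))$ with $u,v\in\mathcal B(X)$. For each such realizable pair $(P,Q)$, fix witnesses $u_0,v_0$ with $T(u_0)=P$ and $I(v_0)=Q$, apply mixing to the nonempty open cylinders determined by $u_0$ and $v_0$ at coordinate $0$, and unpack $\sigma^n(U)\cap V\neq\emptyset$ exactly as in the first part; this produces a constant $M_{P,Q}$ such that for all $m\geq M_{P,Q}$ there is a walk of length $m$ from some vertex of $P$ to some vertex of $Q$. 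Taking $M$ to be the maximum of $M_{P,Q}$ over the finitely many realizable pairs, the key fact applied to $(P,Q)=(T(u),I(v))$ shows that for all $u,v\in\mathcal B(X)$ and $n\geq M$ there is $w\in\mathcal B_n(X)$ with $uwv\in\mathcal B(X)$.

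I expect the crux to be exactly this last passage from the ``non-uniform'' gluing (which is just a restatement of mixing and uses no finiteness) to a single bound $M$ good for all $u,v$: it relies essentially on the finite-state nature of a sofic subshift, encoded here as finiteness of the family $\{(T(u),I(v))\}$. The surrounding steps — pruning to an essential presentation and translating between open sets, cylinders, and allowable words — are routine. As an alternative for the forward direction, one could instead invoke the Fischer cover of the irreducible sofic shift $X$, which is primitive when $X$ is mixing, so that a high power of its adjacency matrix is strictly positive and connects all pairs of vertices by walks of every sufficiently large length; but the argument sketched above needs nothing beyond the bare definition of soficity.
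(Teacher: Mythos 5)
The paper does not actually prove Proposition \ref{soficmixingnumber}; it is stated as a well-known fact and used as a black box, so there is no in-paper argument to compare against. Your proof is correct and complete. The backward implication (uniform gluing $\Rightarrow$ mixing) is the routine cylinder-set translation, and your index bookkeeping ($\ell=n-(2k+1)$ so that $v$ sits on the block $[n-k,n+k]$) checks out. The forward implication is where the content lies, and your argument is sound: after pruning to an essential presentation, the key fact relating the existence of a connecting word $w\in\mathcal B_n(X)$ to the existence of a length-$n$ walk from $T(u)$ to $I(v)$ is correct in both directions (essentiality is what guarantees that the concatenated walk extends to a bi-infinite one, so you are right to insist on it), and the uniformization over the finitely many realizable pairs $(T(u),I(v))\subseteq\mathcal V\times\mathcal V$ is exactly the point at which soficity is needed --- mixing applied to the cylinders of fixed witnesses $u_0,v_0$ yields a threshold $M_{P,Q}$ that then serves every $u,v$ realizing the same pair. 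This is a clean, self-contained route that avoids invoking the Fischer cover or Perron--Frobenius primitivity, and it would be a reasonable proof to include if the paper wished to make the proposition self-contained; the only cosmetic remark is that the paper also applies the result to forward subshifts, so you would want to note that the argument carries over verbatim to one-sided cylinders and walks.
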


For any subshift $X$, we will refer to an $M$ satisfying the conditions of this proposition as a \emph{mixing number} for $X$. It is clear that a subshift with a mixing number is mixing, but the converse fails in general.

\section{Neighborhood $N$-Shadowing in Subshifts} \label{sec:NeighborhoodSofic}
We now focus our attention on neighborhood $N$-shadowing in subshifts. As subshifts have much more structure than general dynamical systems, it is not uncommon for nonequivalent dynamical properties to be equivalent in some classes of subshifts. In particular, it is natural to ask if neighborhood $N$-shadowing might be equivalent to  $N$-shadowing or shadowing in this setting. We open with the following example which demonstrates that this is not the case.

\begin{example} \label{ex:neighborhoodVsnon}
	
	The sofic subshift $X$, presented by the labeled graph $\mathcal{G}$ below 
	 has $2$-shadowing, but not neighborhood $N$-shadowing for any $N\in\mathbb N$. 
	
		\begin{center}
		\begin{tikzpicture}
			[auto=left,every node/.style={circle}]
			\node[draw = black] (1) at (-5,0){};
			\path[->,line width=.2mm] (1) [out=135,in=225,looseness=10] edge node[left] {$1$} (1);  
			\path[->,line width=.2mm] (1) [out=45,in=315,looseness=10] edge node[right] {$0$} (1);  
			\node[draw = black] (2) at (-2,0){};
			\path[->,line width=.2mm] (2) [out=135,in=225,looseness=10] edge node[left] {$2$} (2);  
			\path[->,line width=.2mm] (2) [out=45,in=315,looseness=10] edge node[right] {$0$} (2);  
		\end{tikzpicture}
			\end{center}
That $X$ does not have neighborhood $N$-shadowing (for any $N\in\mathbb N$) follows by taking $\varepsilon=1/2$ and noting that for any $\delta>0$, there exists $n$ sufficiently large so that $\langle x^i\rangle_{i\in\omega}$ given by

\[
	x^0 = 1\underbrace{00\cdots00}_{n}\cdots\quad
	x^1= \underbrace{00\cdots00}_{n}2\cdots  \quad \text{and }
	x^i=\sigma^{i-1}(x^1) \text{ for } i>1
\]
		is a $\delta$-pseudo-orbit. If $A$ is a subset of $X$ of diameter less than $\varepsilon$ containing at least one point within $\varepsilon$ of $x^0$, then each point in $A$ contains a $1$, and therefore none of its shifts contain a $2$. Thus, for all $x\in A$, $d(\sigma^{n+1}(x),x^{n+1})>\varepsilon$, so $A$ does not $\varepsilon$-shadow this pseudo-orbit. 
		
		To see that $X$ has $2$-shadowing, fix $1>\varepsilon>0$ and take $\delta=\varepsilon$. 
		If $\langle x^i\rangle_{i\in\omega}$ is a $\delta$-pseudo-orbit, define $c=(c_i)_{i\in\omega}$ by taking $c_i=x^i_0$ for each $i\in\omega$. 
		Now, define $a=(a_i)_{i\in\omega}$ by taking $a_i=0$ if $c_i=0$ and $a_i=1$ otherwise. Similarly, define $b=(b_i)_{i\in\omega}$ by taking $b_i=0$ if $c_i=0$ and $b_i=2$ otherwise. It is immediately clear that both $a$ and $b$ belong to $X$ and it is not difficult to check that for each $n\in\omega$, we have at least one of $d(\sigma^n(a),x^n)<\varepsilon$ or $d(\sigma^n(b),x^n)<\varepsilon$.
	
\end{example}

It is well known that the subshifts with shadowing are precisely the subshifts of finite type \cite{Walters1978}. Example \ref{ex:neighborhoodVsnon} demonstrates that there are subshifts with $2$-shadowing which are not subshifts of finite type, so it is natural to ask whether there are subshifts with neighborhood $2$-shadowing which are not subshifts of finite type. The following results show that the class of subshifts with neighborhood $2$-shadowing is indeed much larger than the subshifts of finite type.

\begin{theorem}\label{twosidedmixing}
	Let $(X, \sigma)$ be a subshift with a mixing number. Then $(X, \sigma)$ has two-sided neighborhood $2$-shadowing. In particular, every mixing sofic subshift has two-sided neighborhood $2$-shadowing.
\end{theorem}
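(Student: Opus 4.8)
The plan is to exploit the mixing number $M$ to build two shadowing points $a,b$ which are simultaneously within $2^{-k}$ of the pseudo-orbit for a suitable $k$ depending on $\varepsilon$. First I would reduce to a combinatorial statement: given $\varepsilon>0$, choose $k$ with $2^{-k}<\varepsilon$, and set $\delta = 2^{-(k+1)}$ (or similar). For a two-sided $\delta$-pseudo-orbit $(x^i)_{i\in\mathbb Z}$, the condition $d(\sigma(x^i),x^{i+1})<\delta$ forces the central windows to overlap: $\sigma(x^i)$ and $x^{i+1}$ agree on coordinates $[-(k+1),k+1]$ or so, which means the central blocks $x^i_{[-k,k]}$, glued together along their overlaps, produce a coherent bi-infinite ``target word'' $t\in\mathcal A^\mathbb Z$ with the property that $t_{[i-k,i+k]}$ records (a shift of) the central block of $x^i$. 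Being $2^{-k}$-close to $x^i$ at time $i$ is then essentially the statement that $\sigma^i(z)$ agrees with $t$ on $[i-k,i+k]$.

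The subtlety is that $t$ need not lie in $X$ — indeed this is exactly why plain shadowing fails. Here is where neighborhood $2$-shadowing buys us room: I would cut the line $\mathbb Z$ into consecutive blocks $[n_j, n_{j+1})$ of length at least, say, $2M + 4k$, and on each block record the allowable word $w_j := $ (the portion of $t$ on that block, which \emph{is} in $\mathcal B(X)$ since it is a subword of some $x^i\in X$ — at least once the blocks are short enough relative to the window size; more carefully one takes $w_j$ to be an actual subword of a single $x^i$ that covers the block). Using the mixing number, between consecutive recorded words $w_j$ and $w_{j+1}$ there is a connecting word of the right length, so one can form an element of $X$ that reads $w_j$ on block $j$ and $w_{j+1}$ on block $j+1$; but to cover \emph{all} blocks with a single point we would need to connect every $w_j$ to the next, which is fine — mixing lets us concatenate arbitrarily many allowable words with appropriate gaps. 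So in fact \emph{one} point might seem to suffice; the reason it does not is the two-sided/bi-infinite nature: we cannot simultaneously control a single point on infinitely many blocks extending in both directions while keeping the connecting gaps hidden inside the ``don't care'' regions. The fix — and the heart of the argument — is the classical parity trick: use two points, $a$ built by connecting the even-indexed recorded blocks $w_0, w_2, w_4,\dots$ (and $w_{-2},w_{-4},\dots$) with mixing-connectors placed inside the odd blocks, and $b$ built by connecting the odd-indexed blocks with connectors inside the even blocks. Then for every time $n$, $n$ lies in the \emph{interior} of some block $[n_j,n_{j+1})$; if $j$ is even, $a$ faithfully reads $t$ near $n$ (so $d(\sigma^n(a),x^n)<\varepsilon$), and if $j$ is odd, $b$ does. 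Thus $A=\{a,b\}$ $\varepsilon$-shadows the pseudo-orbit, and $\mathrm{diam}(A)<\varepsilon$ provided we also arrange $a$ and $b$ to agree on a long central block around coordinate $0$ — which we can, since coordinate $0$ lies in the interior of block $0$, and we simply declare block $0$ to be ``even'' so both $a$ and $b$ read $t$ there; their central windows then coincide, giving $d(a,b)<2^{-k}<\varepsilon$.

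The steps in order: (1) fix $\varepsilon$, pick $k$ and $\delta$; (2) from a $\delta$-pseudo-orbit extract the coherent target $t$ and verify that its restriction to any sufficiently short interval is an allowable word; (3) choose the block decomposition with block length $\geq 2M + (\text{slack for }k)$ and designate blocks as even/odd with block $0$ even; (4) invoke Proposition 2.x (the mixing-number characterization) to assemble $a$ from the even blocks and $b$ from the odd blocks, placing connectors strictly inside the opposite-parity blocks so that on a neighborhood of radius $k$ about the interior of any even block, $a=t$, and likewise for $b$ on odd blocks; (5) check $a,b\in X$, check the shadowing inequality at every $n$ by locating $n$'s block, and check $\mathrm{diam}\{a,b\}<\varepsilon$ via the common central block; (6) note the last sentence follows from Proposition 2.x since a mixing sofic subshift has a mixing number.

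The main obstacle I anticipate is step (2)–(3): being careful that the recorded block words are genuinely in $\mathcal B(X)$ and that the ``interior'' (the part of a block far enough from its endpoints that the connectors in the neighboring blocks don't intrude into the radius-$k$ window) is nonempty and contains every integer — this forces the block length to exceed $2M$ plus a $k$-dependent buffer on each side, and the bookkeeping relating the pseudo-orbit constant $\delta$, the window radius $k$, the connector length $M$, and the block length must be set up so that all four constraints are mutually consistent. The parity construction itself is standard once the combinatorial frame is in place; the delicate point is purely the constant-chasing that guarantees the two points between them cover the whole line with no visible seams.
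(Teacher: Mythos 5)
Your proposal follows essentially the same route as the paper's proof: extract the coherent target sequence from the pseudo-orbit (the paper's $c$), verify that sufficiently short windows of it are allowable words, and build two points via an alternating block decomposition with mixing-number connectors hidden in opposite-parity blocks, arranging both points to agree with the target near coordinate $0$ to get the diameter bound. The one wrinkle is your final step: if $b$'s connectors live inside the even blocks you cannot literally ``declare block $0$ even'' for both points; the paper resolves this by offsetting $b$'s block decomposition by half a period so that $0$ lies in an agreement region of each point separately, which is exactly the repair your sketch needs.
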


\begin{proof}
	
	Let $(X, \sigma)$ be a subshift with a mixing number. Fix $\varepsilon>0$ and let $M$ be a mixing number for $(X,\sigma)$. Choose $N>M$ so that if $a,b\in X$ with $a_{(-N,N)}=b_{(-N,N)}$, then $d(a,b)<\varepsilon$. Also, choose $\delta>0$ such that if $a,b\in X$ with $d(a,b)<\delta$, then $a_{[-4N,4N]}=b_{[-4N,4N]}$.
	
	Now, let $\seq{x^i}_{i\in\mathbb Z}$ be a two-sided $\delta$-pseudo-orbit, and define the $c=(c_i)_{i\in\mathbb Z}\in\mathcal A^\mathbb Z$ by taking $c_i=x^i_0$. By our choice of $\delta$, it is easy to check that for a fixed $i\in\mathbb Z$ and any $j\in[-4N,4N]$, we have $x^i_j=x^{i+j}_0=c_{i+j}$, from which it follows that $x^i_{[-4N,4N]}=c_{[i-4N,i+4N]}$. In particular, since $x^i\in X$ for each $i\in\mathbb Z$, we have $c_{[i-4N,i+4N]}\in\mathcal B(X)$ for all $i\in\mathbb Z$. It follows that for each $t\in\mathbb Z$, we also have $c_{[t,t+5N]}\in\mathcal B(X)$.
	
	We now construct points $a,b\in X$ with the property that $a_{(-N,N)}=b_{(-N,N)}=x^0_{(-N,N)}$ (and hence $d(a,b)$, $d(a,x)$ and $d(b,x)$ are all less than $\varepsilon$) and for each $i\in\mathbb Z$, we have that \(x^i_{(-N,N)}\in\{a_{(i-N,i+N)}, b_{(i-N,i+N)}\}\).
	Since $a_{(i-N,i+N)}=(\sigma^i(a))_{(-N,N)}$ and $b_{(i-N,i+N)}=(\sigma^i(b))_{(-N,N)}$, it follows that one of $d(x^i,\sigma^i(a))$ or $(x^i,\sigma^i(b))$ is less than $\varepsilon.$ In other words, the set $\{a,b\}$ has diameter less than $\varepsilon$ and $\varepsilon$-shadows $\seq{x_i}_{i\in\mathbb Z}$. 
	
	We now construct the point $a$. First, for $j\in\mathbb N$, define $L_j=(5-6j)N$ and $R_i=(6j-2)N$. Note that for each $j\in\mathbb N$, $c_{[N+R_j,R_{j+1})}$ and $c_{(L_{j+1},L_j-N]}$ both belong to $\mathcal B(X)$ since $R_{j+1}-N-R_j=L_j-N-L_{j+1}=5N$. Begin by taking $a_{(L_1,R_1)}=c_{(L_1,R_1)}$.Now, assuming that $a_{(L_j,R_j)}$ has been defined and belongs to $\mathcal B(X)$, we define $a_{[R_j,R_{j+1})}$ and $a_{(L_{j+1},L_j]}$ as follows. Since $a_{(L_j,R_j)}$ and $c_{[N+R_j,R_{j+1})}$ belong to $\mathcal B(X)$ and $N>M$, we can find $w_j\in\mathcal B_N(X)$ with $a_{(L_j,R_j)}w_jc_{[N+R_j,R_{j+1})}\in\mathcal B(X)$. We can then find $u_j\in\mathcal B_N(X)$ with 
	\[a_{(L_{j+1},R_{j+1})}=c_{(L_{j+1},L_j-N]}u_ja_{(L_j,R_j)}w_jc_{[N+R_j,R_{j+1})}\in\mathcal B(X).\]

	This process defines a point $a\in X$. Note that since $L_j\leq-N<N\leq R_j$, we have $a_{(-N,N)}=c_{(-N,N)}$ as desired. In addition, we have that $a_{[N+R_j,R_{j+1})}=c_{[N+R_j,R_{j+1})}$ and $a_{(L_{j+1},L_j-N]}=c_{(L_{j+1},L_j-N]}$ for all $j\in\mathbb N$.
		
	We define the point $b$ similarly. We begin by defining, for $j\in\mathbb N$, $S_j=(2-6j)N$ and $T_j=(6j-5)N$ and taking $b_{(S_1,T_1)}=c_{(S_1,T_1)}$. Using the fact that $N$ is a mixing number for $X$, we proceed as above, defining $b_{(S_j,T_j)}$ recursively. The resulting point $b$ has the properties that $b_{(-N,N)}=c_{(-N,N)}$ and that $b_{[N+T_j,T_{j+1})}=c_{[N+T_j,T_{j+1})}$ and $b_{(S_{j+1},S_j-N]}=c_{(S_{j+1},S_j-N]}$ for all $j\in\mathbb N$.
	
	All that remains is to verify that \(x^i_{(-N,N)}\in\{a_{(i-N,i+N)}, b_{(i-N,i+N)}\}\) for each $i\in\mathbb Z$. Towards this end, fix $i\in\mathbb Z$ and suppose that $x^i_{(-N,N)}\neq b_{(i-N,i+N)}$. Since $x^i_{(-N,N)}=c_{(i-N,i+N)}$, it follows that $(i-N,i+N)$ is not a subset of  $(S_{j+1},S_j-N]$ or of $[N+T_j,T_{j+1})$ for any $j\in\mathbb N$ (as $c$ and $b$ agree in those intervals). However \[\mathbb Z\setminus\bigcup_{j\in\mathbb N}(S_{j+1},S_j-N]\cup [N+T_j,T_{j+1})\subseteq\bigcup_{j\in\mathbb Z}[(6j+1)N,(6j+2)N],\] and so we can find $j\in\mathbb Z$ with $(i-N,i+N)\cap[(6j+1)N,(6j+2)N]\neq\ns$. It follows that $(i-N,i+N)\subseteq((6j-1)N,(6j+4)N)$. We can also find $k\in\mathbb N$ with $((6j-1)N,(6j+4)N)$ a subset of either $[N+R_k,R_{k+1})$ or $(L_{k+1},L_k-N]$, the intervals on which $a$ and $c$ agree. It follows that
	\(x^i_{(-N,N)}=c_{(i-N,i+N)}=a_{(i-N,i+N)}\).

	\end{proof}

Unsurprisingly, an analogous result (with analogous proof) holds for forward subshifts.

\begin{theorem}\label{Prop:SoficForwardNSHadow}
	Let $(X,\sigma)$ be a forward subshift with a mixing number. Then $(X,\sigma)$ has neighborhood $2$-shadowing. In particular if $(X,\sigma)$ is a  mixing sofic forward subshift, then it has neighborhood $2$-shadowing.
\end{theorem}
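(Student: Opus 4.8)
The plan is to mirror the proof of Theorem~\ref{twosidedmixing} almost verbatim, with the only genuine change being bookkeeping with one-sided index sets. Given $\varepsilon>0$ and a mixing number $M$ for the forward subshift $(X,\sigma)$, I would again pick $N>M$ so that agreement of two sequences on coordinates $[0,N)$ forces distance less than $\varepsilon$, and pick $\delta>0$ so that $d(a,b)<\delta$ forces agreement on $[0,4N]$. Given a (one-sided) $\delta$-pseudo-orbit $\seq{x^i}_{i\in\omega}$, form $c=(c_i)_{i\in\omega}$ with $c_i=x^i_0$; exactly as before, $x^i_j=c_{i+j}$ for $0\le j\le 4N$, hence every window $c_{[t,t+5N]}$ lies in $\mathcal B(X)$. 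The goal is to build $a,b\in X$ agreeing with $x^0$ on $[0,N)$ (so $\{a,b\}$ has diameter $<\varepsilon$ and $a,b,x^0$ are pairwise $\varepsilon$-close) such that for every $i\in\omega$, $x^i_{[0,N)}=c_{[i,i+N)}\in\{a_{[i,i+N)},\,b_{[i,i+N)}\}$; since $a_{[i,i+N)}=(\sigma^i a)_{[0,N)}$, this gives that one of $\sigma^i a,\sigma^i b$ is $\varepsilon$-close to $x^i$, i.e.\ $\{a,b\}$ $\varepsilon$-shadows the pseudo-orbit.

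For the construction of $a$ and $b$, the two-sided proof used a partition of $\mathbb Z$ into a periodic family of ``anchor'' blocks $[(6j+1)N,(6j+2)N]$ of length $N$, with $a$ forced to equal $c$ on a large neighborhood of the even-indexed anchors and $b$ on a large neighborhood of the odd-indexed ones, the remaining (length-$N$) gaps being filled by mixing words $w_j,u_j\in\mathcal B_N(X)$. In the one-sided case I would use the same idea but only for $j\ge 0$: define, for $j\in\omega$, intervals analogous to $[L_{j+1},L_j]$ on the left end replaced by a single initial finite block. Concretely, one can simply run the recursion rightward: set $R_j=(6j+4)N$ say (indices chosen so consecutive gaps have width $5N$ and the anchors land at $[(6j+1)N,(6j+2)N]$), start by defining $a$ on $[0,R_0)$ to agree with $c$ there (legitimate since $R_0=4N$ and $c_{[0,4N]}\in\mathcal B(X)$), and then recursively, given $a_{[0,R_j)}\in\mathcal B(X)$, choose $w_j\in\mathcal B_N(X)$ with $a_{[0,R_j)}\,w_j\,c_{[R_j+N,R_{j+1})}\in\mathcal B(X)$ (possible as $N>M$), and set $a_{[0,R_{j+1})}=a_{[0,R_j)}w_jc_{[R_j+N,R_{j+1})}$. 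This produces $a\in X$ with $a_{[0,N)}=c_{[0,N)}$ and $a$ agreeing with $c$ on each interval $[R_j+N,R_{j+1})$. Define $b$ identically but with a shifted family of intervals $T_j$ (offset by $3N$, so $b$ agrees with $c$ on intervals covering the odd anchors $[(6j+4)N,(6j+5)N]$, or whichever parity is complementary). The covering lemma is then: $\omega$ minus the union of the ``$a$-agreement'' intervals and the ``$b$-agreement'' intervals is contained in the union of the length-$N$ anchor blocks; if $[i,i+N)$ meets an anchor block it lies inside a length-$5N$ window on which either $a$ or $b$ fully agrees with $c$ (after choosing $N$ large enough that $[i,i+N)$ with $i\ge 0$ is contained in such a window), giving $c_{[i,i+N)}\in\{a_{[i,i+N)},b_{[i,i+N)}\}$ as required.

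The routine verification is the interval arithmetic: one must choose the offsets so that (i) the finitely many coordinates near $0$ (below the first anchor) are handled—here $[0,N)$ itself is inside the initial block $[0,4N)$ where $a$ agrees with $c$, so $i=0$ is fine, and more generally any $[i,i+N)$ not meeting an anchor lies in some $a$- or $b$-agreement interval; (ii) consecutive agreement intervals of $a$ are separated by exactly a width-$N$ gap that the mixing word fills, and likewise for $b$; and (iii) $a$'s gaps are covered by $b$'s agreement intervals and vice versa. These are the same estimates as in the two-sided case with the left half of $\mathbb Z$ deleted and a harmless finite initial segment added, so I do not expect any difficulty beyond carefully re-indexing. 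The one point requiring a moment's care—the ``main obstacle,'' such as it is—is the boundary near coordinate $0$: in the two-sided proof the anchor structure is bi-infinite and symmetric, whereas here we must make sure the initial finite segment of $a$ (and of $b$) is itself an allowable word and that it does not create an uncovered window $[i,i+N)$ for small $i$. Choosing the initial block long enough (length $\ge 4N$, matching the first agreement interval) and checking that the first anchor block sits at index $\ge N$ resolves this. Everything else transfers mechanically, which is why the statement is described as having an analogous proof.
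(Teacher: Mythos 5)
Your proposal is correct and follows essentially the same route as the paper, which simply adapts the two-sided construction of Theorem~\ref{twosidedmixing} to one-sided indices; your anchor-block bookkeeping and the treatment of the boundary at $0$ are sound. One small constant needs fixing: agreement on $[0,4N]$ only guarantees that length-$(4N+1)$ windows of $c$ are allowable, which does not imply that the length-$(5N+1)$ windows you need are allowable (overlapping allowable words need not glue outside finite type), so choose $\delta$ to force agreement on $[0,8N]$, mirroring the paper's use of $[-4N,4N]$.
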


While all mixing sofic subshifts have neighborhood 2-shadowing, the following is an example of a subshift without mixing but which still has neighborhood 2-shadowing.

\begin{example}\label{ex:NotMixing} The sofic subshift presented by the labeled graph $\mathcal{H}$, below, is not mixing but has neighborhood 2-shadowing.

\begin{center}
    		\begin{tikzpicture}
			[auto=left,every node/.style={circle}]
			\node[draw = black] (1) at (-5,0){};
			\node[draw = black] (2) at (-2,0){};
			\node[draw = black] (3) at (1,0){};
			\node[draw = black] (4) at (4,0){};
			
			\path[->,line width=.2mm] (2) [out=-15,in=-90,looseness=20] edge node[right] {$1$} (2);
			\path[->,line width=.2mm] (2) [out=225,in=-45,looseness=1] edge node[below] {$0$} (1);
			\path[->,line width=.2mm] (1) [out=40,in=140,looseness=1] edge node[above] {$0$} (2);    
			\path[->,line width=.2mm] (2) [out=40,in=140,looseness=1] edge node[above] {$2$} (3);    
			\path[->,line width=.2mm] (3) [out=195,in=270,looseness=20] edge node[left] {$4$} (3);
			\path[->,line width=.2mm] (4) [out=225,in=-45,looseness=1] edge node[below] {$3$} (3);
			\path[->,line width=.2mm] (3) [out=40,in=140,looseness=1] edge node[above] {$3$} (4);    
		\end{tikzpicture}
\end{center}
That this subshift is not mixing is seen by noting that no point beginning with a $3$ can have a $0$ or $1$ occuring later in the sequence. The proof of Theorem \ref{twosidedmixing} can be adapted (noting that if $\langle x^i\rangle$ is a $\delta$-pseudo-orbit for $\delta<1$ and $x^i$ contains a $2$, $3$, or $4$, then for $j>i$, $x^j$  cannot have $0$ or $1$ appearing anywhere) to prove that this subshift has neighborhood $2$-shadowing. This can also be proven by applying Theorem \ref{thm:semifinitetype}.
\end{example}

In light of this example, we define a generalization of the finite type condition that is sufficient to ensure neighborhood $2$-shadowing in subshifts. Recall that subshifts of finite type have the following alternative characterizations \cite{Lind_Marcus_1995}; a subshift $(X,f)$ is a subshift of finite type if and only if there exists $M\in\mathbb N$ such that for all $u,v\in\mathcal B(X)$, if $w\in\mathcal B_n(X)$ with $uv,vw\in\mathcal B(X)$ and $n\geq M$, then $uvw\in\mathcal B(X)$.

\begin{definition}	
	A shift space $(X,\sigma)$ is a subshift of quasi-finite type if there exists $M\in \mathbb{N}$ with the property that for all $u,v\in \mathcal{B}(X)$, if there exists $w\in \mathcal{B}_n(X)$ with $uw,wv\in \mathcal{B}(X)$ and $n\geq M$, then there exists $z\in \mathcal{B}_n(X)$ such that $uzv\in \mathcal{B}(X)$. In this case, we call $M\in \mathbb{N}$ a \textit{subshift of quasi-finite type number.} 
\end{definition}

It is clear that each subshift of finite type is trivially a subshift of quasi-finite type, as are subshifts with mixing numbers. That there are subshifts of quasi-finite type which are neither mixing nor of finite type follows from the following observation.

\begin{example}
	The sofic subshift $(X,\sigma)$ of Example \ref{ex:NotMixing} is neither mixing nor of finite type, but is a subshift of quasi-finite type. That $(X,\sigma)$ is not mixing follows from the discussion in \ref{ex:NotMixing}. That it is not of finite type follows by observing that for all $n\in\mathbb N$, we have $10^n,0^n1\in\mathcal B(X)$, but if $n$ is odd, $10^n1\notin\mathcal B(X)$.
	
	To show that $(X,\sigma)$ is a subshift of quasi-finite type, let $Y$ be the even shift and $M\geq 5$ be a mixing number for $Y$. We show that $M$ is a subshift of quasi-finite type number for $X.$ Note that for any $u,v\in \mathcal{B}(Y)$ and $n\ge M$ it is easy to verify there exists $z\in \{ 1^n, \; 01^{n-2}0, \; 1^{n-1}0,\; 01^{n-1} \}\subset \mathcal{B}_n(Y)$ such that $uzv\in \mathcal{B}(Y)$.

To show that $M$ is a subshift of quasi-finite type number for $X$, define $\pi: X\to Y $ for $x\in X$ by replacing any 2 or 4 in $x$ with a 1 and any 3 in $x$ with a 0. Suppose that $u,v,w\in \mathcal{B}(X)$ with $|w|=n\ge M$ so that $uw,\; wv\in \mathcal{B}(X)$. Let $\pi(u)=u'$ and $\pi(v)=v'.$ Since $M$ is a mixing number for $Y$ and $n\ge M$, by our above argument, we may find $z'\in \mathcal{B}_n(Y)$ such that $u'z'v'\in \mathcal{B}(Y)$ and $z'_j=1$ for some $j\in \omega$. 

If $v$ contains a 2, 1, or 0, then $w$ contains only 1's and 0's. Then $z'\in \pi^{-1}(\{z'\})$ and $uz'v\in \mathcal{B}(X)$. If $u$ contains a 2, 3, or 4, then $w$ contains only 3's and 4's. Then we may find $z\in \pi^{-1}(\{z'\} )$ such that $$z_i=\begin{cases}
    3\; &z_i'=0\\
    4\; &z_i'=1
\end{cases}$$
and $uzv\in \mathcal{B}(X)$. If $w$ contains a 2, find $z\in \pi^{-1}(\{z'\} )$ such that $$z_i=\begin{cases}
    2 \; &i=j\\
    0\; &w_i'=0,\; i<j\\
    1\; &w_i'=1,\; i<j\\
    3\; &w_i'=0,\; i>j\\
    4\; &w_i'=1,\; i>j\\
\end{cases}$$ and $uzv\in \mathcal{B}(X).$ Therefore, $M$ is a subshift of quasi-finite type number for $X$.

\end{example}

We now prove the following generalization of Theorem \ref{twosidedmixing}. The proof proceeds in a nearly identical manner--the only significant difference is in the details of the construction of the points $a$ and $b$ where mixing was applied in the earlier proof.

\begin{theorem} \label{thm:semifinitetype}
	
	Let $(X,\sigma)$ be a subshift of quasi-finite type. Then $(X,\sigma)$ has neighborhood $2$-shadowing.
	
\end{theorem}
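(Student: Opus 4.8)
The plan is to follow the proof of Theorem~\ref{twosidedmixing} almost verbatim (and, for a forward subshift, the proof of Theorem~\ref{Prop:SoficForwardNSHadow}), with the quasi-finite type property taking over the single role that mixing played there. Let $M$ be a subshift of quasi-finite type number for $X$, fix $\varepsilon>0$, pick $N>M$ so that agreement of two points on the coordinates $(-N,N)$ forces distance less than $\varepsilon$, and pick $\delta>0$ so that $d(a,b)<\delta$ forces $a_{[-4N,4N]}=b_{[-4N,4N]}$. Given a two-sided $\delta$-pseudo-orbit $\seq{x^i}_{i\in\mathbb Z}$, set $c=(c_i)_{i\in\mathbb Z}$ with $c_i=x^i_0$; exactly as before one obtains $x^i_{[-4N,4N]}=c_{[i-4N,i+4N]}$, so every window of $c$ of length at most $8N+1$ lies in $\mathcal B(X)$. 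The target is, once again, to produce $a,b\in X$ with $a_{(-N,N)}=b_{(-N,N)}=x^0_{(-N,N)}$, with $a$ agreeing with $c$ on each interval $[N+R_j,R_{j+1})$ and $(L_{j+1},L_j-N]$ and $b$ agreeing with $c$ on each $[N+T_j,T_{j+1})$ and $(S_{j+1},S_j-N]$ (the same constants $L_j,R_j,S_j,T_j$). Given such $a$ and $b$, the computation that every window $(i-N,i+N)$ is contained in one of these intervals, and hence that $\{a,b\}$ has diameter less than $\varepsilon$ and $\varepsilon$-shadows $\seq{x^i}$, is word-for-word the argument in the proof of Theorem~\ref{twosidedmixing}.

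So the only thing to redo is the recursive construction of $a$ (and, identically, $b$). As there, one builds $a$ by extending a central word $W$ (starting with $W=c_{(-N,4N)}$) outward one side at a time, each extension slotting a length-$N$ word into the gap between the current word and the next length-$5N$ block of $c$. In the mixing proof the inserted bridge $w_j$ (resp.\ $u_j$) came for free: mixing gives $WzB\in\mathcal B(X)$ for \emph{some} $z\in\mathcal B_N(X)$, where $B$ is the next $c$-block. For a subshift of quasi-finite type we instead need a \emph{witness}: to invoke the property on $u=W$, $v=B$ with bridge length $N\ge M$ we must first exhibit $w^*\in\mathcal B_N(X)$ with $Ww^*\in\mathcal B(X)$ and $w^*B\in\mathcal B(X)$, after which the property supplies the desired $z\in\mathcal B_N(X)$ with $WzB\in\mathcal B(X)$. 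The witness is the block $c_J$ of $c$ that occupies the gap interval $J$ (of length $N$): then $w^*B=c_J c_{(\mathrm{block})}$ is a window of $c$ and so is automatically in $\mathcal B(X)$. To make the remaining condition $Ww^*\in\mathcal B(X)$ available, I would strengthen the inductive hypothesis so that it carries, in addition to ``$W\in\mathcal B(X)$'', the condition $\mu^L\,W\,\mu^R\in\mathcal B(X)$, where $\mu^L$ and $\mu^R$ are the length-$N$ blocks of $c$ occupying the two gaps immediately adjacent to $W$. One then extends to the right by applying quasi-finite type to $u=\mu^L W$, $v=B\,\nu$ (the next $c$-block followed by the new right-hand $c$-margin $\nu$ of length $N$), bridge length $N$, and witness $w^*=\mu^R$: here $uw^*=\mu^L W\mu^R\in\mathcal B(X)$ is the inductive hypothesis, $w^*v=\mu^R B\,\nu$ is a window of $c$ of length $7N\le 8N+1$ and so is in $\mathcal B(X)$, and we obtain $z\in\mathcal B_N(X)$ with $\mu^L W z B\,\nu\in\mathcal B(X)$; taking the bridge to be $z$ and the enlarged word to be $WzB$ restores every required condition — in particular the strengthened hypothesis — for the new word. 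Extension to the left is the mirror image, with the roles of $u$ and $v$ interchanged. The resulting $a\in X$ has precisely the prescribed agreement intervals (the bridges $z$ now playing the roles of $w_j,u_j$), and $b$ is built the same way.

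The step I expect to be the main obstacle is exactly this bookkeeping. It is delicate because in a general subshift one cannot ``overlap-glue'': $\alpha\beta\in\mathcal B(X)$ and $\beta\gamma\in\mathcal B(X)$ do not imply $\alpha\beta\gamma\in\mathcal B(X)$. So the construction must be arranged so that each extension is a \emph{single} application of quasi-finite type whose output already contains the two $c$-margins that will serve as witnesses at the next step; identifying the correct strengthened invariant ($\mu^L W\mu^R\in\mathcal B(X)$) and checking that it survives an extension on either side is where the real work is. Everything else — the choice of $N$ and $\delta$, the list of agreement intervals, the combinatorial verification that they cover every window $(i-N,i+N)$, and the conclusion that $\{a,b\}$ is a set of cardinality at most $2$ and diameter less than $\varepsilon$ that $\varepsilon$-shadows the pseudo-orbit — transfers without change from the proof of Theorem~\ref{twosidedmixing}, and the forward-subshift statement follows in the same way as Theorem~\ref{Prop:SoficForwardNSHadow}.
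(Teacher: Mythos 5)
Your proposal is correct and follows essentially the same route as the paper's proof: the paper likewise strengthens the inductive hypothesis to carry the two flanking $c$-margins (it maintains $c_{[L_j-N,L_j]}a_{(L_j,R_j)}c_{[R_j,N+R_j]}\in\mathcal B(X)$) and uses the margin block of $c$ as the witness $w$ for each application of the quasi-finite type property, with the adjacent window of $c$ (of length $7N+1\leq 8N+1$) supplying the other hypothesis. The only differences are immaterial indexing choices (bridges of length $N+1$ rather than $N$).
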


\begin{proof}
	Let $(X, \sigma)$ be a subshift of quasi-finite type. Fix $\varepsilon>0$ and let $M$ witness that $(X,\sigma)$ is a subshift of quasi-finite type. Choose $N>M$ so that if $a,b\in X$ with $a_{(-N,N)}=b_{(-N,N)}$, then $d(a,b)<\varepsilon$. Also, choose $\delta>0$ such that if $a,b\in X$ with $d(a,b)<\delta$, then $a_{[-4N,4N]}=b_{[-4N,4N]}$.
	
	Now, let $\seq{x^i}_{i\in\mathbb Z}$ be a two-sided $\delta$-pseudo-orbit, and define the $c=(c_i)_{i\in\mathbb Z}\in\mathcal A^\mathbb Z$ by taking $c_i=x^i_0$. By our choice of $\delta$, it is easy to check that for a fixed $i\in\mathbb Z$ and any $j\in[-4N,4N]$, we have $x^i_j=x^{i+j}_0=c_{i+j}$, from which it follows that $x^i_{[-4N,4N]}=c_{[i-4N,i+4N]}$. In particular, since $x^i\in X$ for each $i\in\mathbb Z$, we have $c_{[i-4N,i+4N]}\in\mathcal B(X)$ for all $i\in\mathbb Z$. It follows that for each $t\in\mathbb Z$ we have $c_{[t,t+7N]}\in\mathcal B(X)$.
	
	We now construct points $a,b\in X$ with the property that $a_{(-N,N)}=b_{(-N,N)}=x^0_{(-N,N)}$ (and hence $d(a,b)$, $d(a,x)$ and $d(b,x)$ are all less than $\varepsilon$) and for each $i\in\mathbb Z$, we have that \(x^i_{(-N,N)}\in\{a_{(i-N,i+N)}, b_{(i-N,i+N)}\}\).
	Since $a_{(i-N,i+N)}=(\sigma^i(a))_{(-N,N)}$ and $b_{(i-N,i+N)}=(\sigma^i(b))_{(-N,N)}$, it follows that one of $d(x^i,\sigma^i(a))$ or $(x^i,\sigma^i(b))$ is less than $\varepsilon.$ In other words, the set $\{a,b\}$ has diameter less than $\varepsilon$ and $\varepsilon$-shadows $\seq{x_i}_{i\in\mathbb Z}$. 
	
	We now construct the point $a$. First, for $j\in\mathbb N$, define $L_j=(5-6j)N$ and $R_i=(6j-2)N$. Note that for each $j\in\mathbb N$, 
		\[c_{[R_j,N+R_{j+1}]}=c_{[R_j,N+R_j]}c_{(N+R_j,R_{j+1})}c_{[R_{j+1},N+R_{j+1}]}\in\mathcal B(X)\]
	and
		\[c_{[L_{j+1}-N,L_j]}=c_{[L_{j+1}-N,L_{j+1}]}c_{(L_{j+1},L_j-N)}c_{[L_{j}-N,L_j]}\in\mathcal B(X),\] 
	since $N+R_{j+1}-R_j=L_j-(L_{j+1}-N)=7N$. 
	
	Begin by taking $a_{(L_1,R_1)}=c_{(L_1,R_1)}$ and observe that
	  
		\[c_{[L_1-N,L_1]}a_{(L_1,R_1)}c_{[{R_1,N+R_1}]}=c_{[L_1-N,N+R_1]}\in\mathcal B(X)\] 
	since $R_1+N-(L_1-N)=7N$. Now, assuming that $a_{(L_j,R_j)}$ has been defined so that 
		\[c_{[L_j-N,L_j]}a_{(L_j,R_j)}c_{[R_j,N+R_j]}\in\mathcal B(X),\] 
	we define $a_{(N+R_j,N+R_{j+1}]}$ and $a_{[L_{j+1}-N,L_j-N)}$ as follows. Since $c_{[L_j-N,L_j]}a_{(L_j,R_j)}c_{[R_j,N+R_j]}$ and $c_{[R_j,N+R_j]}c_{(N+R_j,R_{j+1})}c_{[R_{j+1},N+R_{j+1}]}$ belong to $\mathcal{B}(X)$ and $c_{[R_j,N+R_j]}\in\mathcal B_{N+1}(X)$ with $N+1>M$, we can find and $w_j\in\mathcal B_{N+1}(X)$  with \[c_{[L_j-N,L_j]}a_{(L_j,R_j)}w_jc_{(N+R_j,R_{j+1})}c_{[R_{j+1},N+R_{j+1}]}\in\mathcal B(X).\] Similarly, we can then find $u_j\in\mathcal B_{N+1}(X)$ with 
\[c_{[L_{j+1}-N,L_j]}c_{(L_{j+1},L_j-N)}u_ja_{(L_j,R_j)}w_jc_{(N+R_j,R_{j+1})}c_{[R_{j+1},N+R_{j+1}]}\in\mathcal B(X).\]
	We now take $a_{(L_{j+1},R_{j+1})}=c_{(L_{j+1},L_j-N)}u_ja_{(L_j,R_j)}w_jc_{(N+R_j,R_{j+1})}$.
	
	This process defines a point $a\in X$. Note that since $L_j\leq-N<N\leq R_j$, we have $a_{(-N,N)}=c_{(-N,N)}$ as desired. In addition, we have that $a_{[N+R_j,R_{j+1})}=c_{[N+R_j,R_{j+1})}$ and $a_{(L_{j+1},L_j-N]}=c_{(L_{j+1},L_j-N]}$ for all $j\in\mathbb N$.
	
	We define the point $b$ similarly. We begin by defining, for $j\in\mathbb N$, $S_j=(2-6j)N$ and $T_j=(6j-5)N$ and taking $b_{(S_1,T_1)}=c_{(S_1,T_1)}$. Using the fact that $N>M$, we proceed as above, defining $b_{(S_j,T_j)}$ recursively. The resulting point $b$ has the properties that $b_{(-N,N)}=c_{(-N,N)}$ and that $b_{[N+T_j,T_{j+1})}=c_{[N+T_j,T_{j+1})}$ and $b_{(S_{j+1},S_j-N]}=c_{(S_{j+1},S_j-N]}$ for all $j\in\mathbb N$.
	
	All that remains is to verify that \(x^i_{(-N,N)}\in\{a_{(i-N,i+N)}, b_{(i-N,i+N)}\}\) for each $i\in\mathbb Z$. Towards this end, fix $i\in\mathbb Z$ and suppose that $x^i_{(-N,N)}\neq b_{(i-N,i+N)}$. Since $x^i_{(-N,N)}=c_{(i-N,i+N)}$, it follows that $(i-N,i+N)$ is not a subset of  $(S_{j+1},S_j-N]$ or of $[N+T_j,T_{j+1})$ for any $j\in\mathbb N$ (as $c$ and $b$ agree in those intervals). However \[\mathbb Z\setminus\bigcup_{j\in\mathbb N}(S_{j+1},S_j-N]\cup [N+T_j,T_{j+1})\subseteq\bigcup_{j\in\mathbb Z}[(6j+1)N,(6j+2)N],\] and so we can find $j\in\mathbb Z$ with $(i-N,i+N)\cap[(6j+1)N,(6j+2)N]\neq\ns$. It follows that $(i-N,i+N)\subseteq((6j-1)N,(6j+4)N)$. We can also find $k\in\mathbb N$ with $((6j-1)N,(6j+4)N)$ a subset of either $[N+R_k,R_{k+1})$ or $(L_{k+1},L_k-N]$, the intervals on which $a$ and $c$ agree. It follows that
	\(x^i_{(-N,N)}=c_{(i-N,i+N)}=a_{(i-N,i+N)}\).
	
\end{proof}

\section{Neighborhood $N$-shadowing in Dynamical Systems} \label{sec:generalnshadow}

In this section, we examine the neighborhood $N$-shadowing property in general dynamical systems.  In particular, we begin by demonstrating an analogue of Theorem \ref{twosidedmixing} and Corollary \ref{Prop:SoficForwardNSHadow} for surjective dynamical systems, using the stronger property of specification. It is well-known that in surjective systems, specification implies mixing, whereas the converse is false in general. Interestingly, in sofic subshifts and in surjective systems with shadowing, the two properties are equivalent (\cite{mixequivtospec} and \cite{anoteonaverageshadowing}, respectively).

We begin with the following straightforward result allowing the specification property to be applied to an infinite set of points $\{y_i:u\in\omega\}$ rather than only finitely many points. It is worth pointing out that in non-compact settings, this infinite form of specification is strictly stronger than the standard specification, see \cite{hammon2025specification}.

\begin{lemma}\label{speclemma}
	
	Let $(X,f)$ be a dynamical system on a compact metric space having the specification property. Then for every $\eta>0$ there exists $M$ such that for any sequence $(y_n)_{n\in \omega}$ in $X$ and any sequence $0\leq j_0\leq k_0<j_1\leq k_1<\dots < j_s\leq k_s\dots$ with $j_{s+1}-k_s \geq M $ for all $s$, there exists $z\in X$ such that for $s\in \omega$, $d(f^i(z),f^i(y_s))< \eta$ for all $j_s\leq i\leq k_s$. 
\end{lemma}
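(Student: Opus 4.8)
The plan is to deduce the infinite form of specification from the finite form by a compactness (diagonal subsequence) argument, taking care to build in a little slack between the two error tolerances so that strict inequalities survive passage to a limit.

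First, given $\eta>0$, I would apply the specification property with $\varepsilon=\eta/2$ and let $M$ be the resulting constant; this $M$ is the one the lemma asserts. Now fix any sequence $(y_n)_{n\in\omega}$ in $X$ and any sequence of times $0\leq j_0\leq k_0<j_1\leq k_1<\cdots$ with $j_{s+1}-k_s\geq M$ for all $s$. For each $n\geq 1$, apply the (finite) specification property to the points $y_0,\dots,y_n$ with the windows $[j_0,k_0],\dots,[j_n,k_n]$ — the gap hypothesis $j_{m+1}-k_m\geq M$ for $0\leq m\leq n-1$ is inherited — to obtain $z_n\in X$ with $d(f^i(z_n),f^i(y_s))<\eta/2$ whenever $0\leq s\leq n$ and $j_s\leq i\leq k_s$.

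Next, since $X$ is compact, pass to a convergent subsequence $z_{n_\ell}\to z\in X$, and claim that $z$ satisfies the conclusion. Fix $s\in\omega$ and $i$ with $j_s\leq i\leq k_s$. For every $\ell$ large enough that $n_\ell\geq s$ we have $d(f^i(z_{n_\ell}),f^i(y_s))<\eta/2$; by continuity of $f^i$ we have $f^i(z_{n_\ell})\to f^i(z)$, and then continuity of the metric gives $d(f^i(z),f^i(y_s))\leq\eta/2<\eta$. Since $s$ and $i$ were arbitrary, $z$ is as required.

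The argument is essentially routine, and I do not anticipate a genuine obstacle. The only two points needing a word of care are the strict-versus-nonstrict inequality — handled by the buffer between $\eta/2$ and $\eta$ — and the degenerate index $n=0$, where the finite specification property as stated (requiring $s\geq2$) does not literally apply; this is harmless, since one may simply start the approximating sequence at $n=1$, or note that the single point $y_0$ shadows its own window $[j_0,k_0]$ exactly.
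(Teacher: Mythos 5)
Your proof is correct and follows essentially the same route as the paper's: apply the finite specification property with tolerance $\eta/2$, extract a convergent subsequence of the approximating points by compactness, and pass to the limit using continuity, with the $\eta/2$ buffer absorbing the loss of strictness. The extra care you take with the $s\geq 2$ requirement in the definition is a minor point the paper glosses over, but it does not change the argument.
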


\begin{proof}
	
	Let $\eta>0$. Find $M$ such that the specification property holds for $\eta/2.$ Let $(y_n)_{n\in \omega}$ be a sequence in $ X $. Fix sequences $\langle j_i\rangle$ and $\langle k_i\rangle$ such that $0\leq j_0\leq k_0<j_1\leq k_1<\dots < j_s\leq k_s\dots$ with $j_{m+1}-k_m \geq M $ for all $1\leq m$.
	
	By the specification property, for each $m\in \omega$, find $z_m\in X$ such that for $0\leq s\leq m$ and $j_s\leq i\leq k_s$, we have $d(f^i(z_m),f^i(y_s))<\eta/2$.
	
	Since $X$ is compact, we may assume without loss that the sequence $\langle z_m\rangle_{m\in\omega}$ converges to a point $z\in X$. To complete the proof, fix $s$ and $i$ with $j_s\leq i\leq k_s$. Then $d(f^i(z),f^i(y_s)=\lim_{m\to\infty}d(f^i(z_m),f^i(y_s)\leq \eta/2<\eta$.
\end{proof}

We now prove the main result of the section.

\begin{theorem}
	
	Let $X$ be a compact metric space and $f:X\to X$ be a continuous surjection with the specification property. Then $(X,f)$ has neighborhood 2-shadowing.
	
\end{theorem}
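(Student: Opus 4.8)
The plan is to adapt the block-interleaving construction from the proof of Theorem~\ref{twosidedmixing}, with Lemma~\ref{speclemma} taking over the role played there by the mixing number, and with surjectivity used to feed specification the right points. Fix $\varepsilon>0$, set $\eta=\varepsilon/4$, let $M$ be the constant supplied by Lemma~\ref{speclemma} for $\eta$, and put $\ell=M+1$. Then invoke the standard fact (a finite modulus-of-continuity induction over $f,f^2,\dots,f^\ell$) that there is $\delta>0$ so that for every $\delta$-pseudo-orbit $(x_i)_{i\in\omega}$, every $m\in\omega$ and every $0\le j\le\ell$ one has $d(f^j(x_m),x_{m+j})<\varepsilon/2$; that is, the genuine orbit of length $\ell$ launched from any point of the pseudo-orbit tracks the pseudo-orbit to within $\varepsilon/2$.

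Given a $\delta$-pseudo-orbit $(x_i)_{i\in\omega}$, partition $\omega$ into the consecutive blocks $B_n=[n\ell,(n+1)\ell)$. Since $f$ is surjective, so is every $f^k$, so for each $n$ pick $p_n\in X$ with $f^{n\ell}(p_n)=x_{n\ell}$; note $p_0=x_0$, and for $i\in B_n$ we have $f^i(p_n)=f^{i-n\ell}(x_{n\ell})$, which lies within $\varepsilon/2$ of $x_i$. Now apply Lemma~\ref{speclemma} twice. For $a$, use the points $p_0,p_2,p_4,\dots$ on the windows $B_0,B_2,B_4,\dots$; consecutive windows are separated by $\ell+1\ge M$, so we obtain $a\in X$ with $d(f^i(a),f^i(p_{2s}))<\eta$ whenever $i\in B_{2s}$. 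For $b$, use the points $x_0,p_1,p_3,p_5,\dots$ on the windows $\{0\},B_1,B_3,B_5,\dots$; again the gaps are at least $M$, so we obtain $b\in X$ with $d(b,x_0)<\eta$ and $d(f^i(b),f^i(p_{2s-1}))<\eta$ whenever $i\in B_{2s-1}$. Set $A=\{a,b\}$.

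Finally, verify the two requirements. For $i\in B_n$: if $n$ is even then $d(f^i(a),x_i)\le d(f^i(a),f^i(p_n))+d(f^i(p_n),x_i)<\eta+\varepsilon/2<\varepsilon$, and if $n$ is odd the same bound holds with $b$ in place of $a$, so $d(f^i(A),x_i)<\varepsilon$ for every $i\in\omega$. For the diameter, $d(a,x_0)=d(f^0(a),f^0(p_0))<\eta$ and $d(b,x_0)<\eta$, so $diam(A)=d(a,b)<2\eta<\varepsilon$. Hence $A$ has cardinality at most two, diameter less than $\varepsilon$, and $\varepsilon$-shadows $(x_i)_{i\in\omega}$.

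I expect the main obstacle to be the simultaneous bookkeeping rather than any single hard estimate: the diameter requirement forces $a$ and $b$ to share the common initial value $x_0$, coverage of all of $\omega$ forces their tracking windows to alternate by parity, and specification demands gaps of at least $M$ between windows, so the block length $\ell$, the parity split, and the extra singleton window $\{0\}$ for $b$ all have to be chosen to reconcile these constraints at once. The one genuinely new ingredient compared with the subshift proofs is the use of surjectivity: specification only yields forward orbits of honest points of $X$, so to make such an orbit align with the pseudo-orbit on a window beginning at time $n\ell$ one must hand specification a preimage $p_n$ of $x_{n\ell}$ under $f^{n\ell}$ rather than $x_{n\ell}$ itself.
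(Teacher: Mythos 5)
Your proposal is correct and follows essentially the same strategy as the paper's proof: both use surjectivity to replace $x_{n\ell}$ by a preimage under $f^{n\ell}$, apply Lemma~\ref{speclemma} twice to two interleaved families of tracking windows whose union is $\omega$, and anchor both resulting points at time $0$ to bound the diameter of $A=\{a,b\}$. The only differences are cosmetic bookkeeping choices (disjoint parity blocks of length $M+1$ versus the paper's overlapping intervals $[2sM,2sM+M]$ and $[2sM+M,2(s+1)M]$, and $\eta=\varepsilon/4$ versus $\varepsilon/2$).
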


\begin{proof}
	
	Let $\varepsilon>0$. Fix $M$ satisfying the conclusion of Lemma \ref{speclemma}, taking $\eta=\varepsilon/2.$ By compactness of $X$ and continuity of $f$ and its iterates, find $0<\delta<\varepsilon/4$ such that if $d(a,b)<\delta,$ then $d(f^i(a),f^i(b))<\varepsilon/8M$ for all $0\leq i\leq 2M$. 
	
	Now, let $(x_i)_{i\in \omega} $ be a $\delta$-pseudo-orbit. Notice that for $i\in\omega$ and $n\leq 2M$,
	\begin{align*}
		d(f^n(x_i),x_{i+n})&\leq \sum_{j=1}^n d\left(f^{1+n-j}(x_{i+{j-1}}), f^{n-j}(x_{i+j})\right)\\
		&= \sum_{j=1}^n d\left(f^{n-j}(f(x_{i+{j-1}})), f^{n-j}(x_{i+j})\right)\\
		&< \frac{(n-1)\varepsilon}{8M}+\delta<\frac{2M\varepsilon}{8M}+\frac{\varepsilon}{4}<\frac{\varepsilon}{2}
	\end{align*}

	Now consider the sequences $\langle j_s\rangle_{s\in \omega}$ and $\langle k_s\rangle_{s\in\omega}$ defined by $j_s = 2sM$ and $k_s= j_s+M$. Note that these sequences satisfy the hypotheses of Lemma \ref{speclemma} relative to choice of $M$ and taking $\eta=\varepsilon/2$.
	
	By surjectivity of $f$, for each $s\in\omega$, fix $y_s\in X$ such that $f^{j_s}(y_s)=x_{j_s}$.
	
	By applying Lemma \ref{speclemma}, find $z\in X$ such that for each $s$ and $i$ in $\omega$ with $j_s\leq i \leq k_s$, we have $d(f^i(z),f^i(y_s))<\varepsilon/2$. Note that since $j_s\leq i \leq k_s$, we have $f^i(y_s)=f^{i-j_s}(x_{j_s})$. Thus, for $j_s\leq i\leq k_s$, we have (since $i-j_s\leq 2M$)
	\begin{align*}
		d(f^i(z),x_i)&\leq d(f^i(z),f^{i-j_s}(x_{j_s}))+d(f^{i-j_s}(x_{j_s}),x_i)\\
		&=	d(f^i(z),f^{i}(y_{s}))+d(f^{i-j_s}(x_{j_s}),x_{(j_s)+(i-j_s)})<\varepsilon. 
	\end{align*}

	In particular, for $i\in\bigcup[j_s,k_s]$, we have that  $d(f^i(z),x_i)<\varepsilon$.
	
	Now consider the sequences $\langle p_s\rangle_{s\in \omega}$ and $\langle q_s\rangle_{s\in\omega}$ defined by taking $p_0=0$, $p_s=k_s$ for $s>0$, and $q_s=j_{s+1}$. Using an argument for these sequences identical to the preceding, choose $w\in X$ so that for $i\in\bigcup[p_s,q_s]$ we have $d(f^i(w),x_i)<\varepsilon$.
	
	Taking $A=\{z,w\}$, we see that $\seq{x_i}$ is $\varepsilon$-shadowed by $A$ since $(\bigcup[j_s,k_s])\cup(\bigcup[p_s,q_s])=\omega$. Furthermore, since $p_0=j_0=0$, we have $d(z,w)\leq d(z,x_0)+d(w,x_0)>\varepsilon/2+\varepsilon/2$ so that the diameter of $A$ is less than $\varepsilon$.

\end{proof}

It is also the case that this generalized form of shadowing can be used to derive other dynamical properties in a system. We say that a system $(X,f)$ has \emph{dense small periodic sets} provided that for all $U\subseteq X$ open, there exists $A\subseteq U$ compact and $n>0$ with $f^n(A)=A$. The following result is in the spirit of the results from \cite{anoteonaverageshadowing}. 

\begin{proposition}\label{weakmixingdensesmall}
	Let $(X,f)$ be a continuous weakly mixing map with dense small periodic sets such that $f$ satisfies the neighborhood $N$-shadowing property for some $N\in\mathbb N$. Then $(X,f)$ is mixing. 
\end{proposition}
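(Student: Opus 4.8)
The plan is to use weak mixing together with the dense small periodic sets to build, for any pair of nonempty open sets $U,V$, long pseudo-orbits realizing "$U$ to $V$ after exactly $n$ steps" for every sufficiently large $n$, and then argue that the neighborhood $N$-shadowing of such pseudo-orbits forces $f^n(U)\cap V\neq\varnothing$ for all large $n$, i.e. mixing. The key leverage is the \emph{diameter} constraint: a shadowing set of diameter less than $\varepsilon$ can be forced to "lie near $U$" at time $0$, and hence the \emph{same} shadowing point(s) must be used to track the pseudo-orbit at later times.

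Concretely, fix nonempty open $U,V$ and let $\varepsilon>0$ be small enough that $2\varepsilon$-balls around chosen centers $u_0\in U$, $v_0\in V$ stay inside $U$, $V$ respectively. Let $\delta>0$ be the neighborhood $N$-shadowing constant for this $\varepsilon$. Using dense small periodic sets, pick a compact set $P\subseteq B_\delta(u_0)$ with $f^p(P)=P$ for some $p>0$; this gives a point $x\in P$ with a $\delta$-dense (indeed genuine) periodic-ish pseudo-orbit segment that stays near $u_0$ for arbitrarily long prefixes — more usefully, it gives, for each $r$, a $\delta$-pseudo-orbit that sits in $B_\varepsilon(u_0)$ for the first $r$ coordinates. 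Then use weak mixing (in fact transitivity suffices here, but weak mixing is what is available and robust) to connect a point near $u_0$ to a point near $v_0$: for a suitable $m$ there is a genuine orbit from $B_\delta(u_0)$ into $B_\delta(v_0)$, and by concatenating the "wait near $u_0$" segment, this transit segment, and then a "wait near $v_0$" segment (again via dense small periodic sets at $v_0$), we obtain a $\delta$-pseudo-orbit $(x_i)$ with $x_i\in B_\varepsilon(u_0)$ for $i\le r$ and $x_i\in B_\varepsilon(v_0)$ for $i$ in a long window around some target time $n$. The point of weak mixing rather than bare transitivity is to gain control of the transit \emph{length} so that, by varying the waiting times, every sufficiently large $n$ is achievable as a time at which the pseudo-orbit is near $v_0$.

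Now apply neighborhood $N$-shadowing to $(x_i)$: there is $A$ with $|A|\le N$, $\mathrm{diam}(A)<\varepsilon$, and $d(f^i(A),x_i)<\varepsilon$ for all $i$. Since $d(f^0(A),x_0)<\varepsilon$ and $x_0\in B_\varepsilon(u_0)$, every point of $A$ lies in $B_{2\varepsilon}(u_0)\subseteq U$; because $\mathrm{diam}(A)<\varepsilon$ we moreover know $A$ is "clustered," but the essential fact is just $A\subseteq U$. For each coordinate $i$ in the long window near $v_0$, some $a\in A$ has $f^i(a)\in B_{2\varepsilon}(v_0)\subseteq V$. Since $|A|\le N$ is bounded and the window has length $> N$ (choose the waiting time near $v_0$ large), by pigeonhole a single $a\in A$ satisfies $f^i(a)\in V$ for at least two values of $i$ in that window — but that is not yet enough; what I actually want is: for the \emph{one} designated time $n$ we arranged, some $a\in A$ has $f^n(a)\in V$ and $a\in U$, giving $f^n(U)\cap V\neq\varnothing$. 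Ranging $n$ over all sufficiently large integers (by adjusting the waiting time before the transit) yields mixing. The role of $N$ being bounded is to make the pigeonhole/window bookkeeping uniform; one arranges the window around $n$ to have length $\ge N+1$ so that the value $n$ itself is covered regardless of which of the $\le N$ points is "responsible" there.

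The main obstacle is the middle step: ensuring that \emph{every} sufficiently large $n$ is realizable as a near-$v_0$ time of a genuine $\delta$-pseudo-orbit that also lies near $u_0$ at time $0$. Transitivity alone only gives \emph{some} transit time; one must either (i) exploit weak mixing to upgrade to a "for all large $n$" statement — e.g. weak mixing plus dense small periodic points is well known to yield a form of mixing-like connectivity, and here the periodic sets at $u_0$ and $v_0$ supply loops of controllable combined length to fill in arbitrary residues — or (ii) observe that we need not realize $n$ exactly but only up to the slack of the width-$(N{+}1)$ window, and that the periodic-set loop at $u_0$ lets us shift the transit start by $1$ at a time once its period and the $v_0$-loop period are coprime-ish, or at worst lets us hit an arithmetic progression, and then a second application with $V$ and $U$ swapped or with a different loop closes the gap. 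Making this length control fully rigorous (and checking the $\delta$-pseudo-orbit estimates across the splice points, which is routine given the choice of $\delta$ versus $f$'s uniform continuity) is where the real work lies; everything after the pseudo-orbit is constructed is a short pigeonhole argument driven by $\mathrm{diam}(A)<\varepsilon$ pinning $A\subseteq U$.
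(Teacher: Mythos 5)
Your overall strategy coincides with the paper's: build a $\delta$-pseudo-orbit that idles near $u_0$ on a small periodic set and then transits to near $v_0$ at a prescribed time $n$, apply neighborhood $N$-shadowing, and use the diameter bound to force the entire shadowing set $A$ into $U$, so that whichever point of $A$ is responsible for time $n$ witnesses $f^n(U)\cap V\neq\varnothing$. That final part of your argument is essentially correct, modulo two small points: since $d(f^0(A),x_0)<\varepsilon$ only guarantees that \emph{some} point of $A$ is within $\varepsilon$ of $x_0$, you need $\mathrm{diam}(A)<\varepsilon$ together with a $3\varepsilon$-ball (not $2\varepsilon$) inside $U$ to conclude $A\subseteq U$; and the pigeonhole window at $v_0$ is unnecessary, because at the single time $n$ the inequality $d(f^n(A),x_n)<\varepsilon$ already hands you a point of $A$ landing in $V$.

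The genuine gap is precisely the step you flag as ``where the real work lies'': you never produce, for \emph{every} sufficiently large $n$, a $\delta$-pseudo-orbit starting near $u_0$ that is near $v_0$ at time $n$, and neither of your proposed repairs closes this. The periods of the periodic sets are whatever the hypothesis happens to supply; you cannot arrange them to be coprime, and widening the arrival window to length $N+1$ does not help because the obstruction is arithmetic (realizing every residue class modulo the period), not combinatorial. The paper resolves this with a standard consequence of weak mixing (Proposition 1.6.2 of de Vries): the set of transit times from $B_{\gamma/4}(u)$ to $B_{\varepsilon}(v)$ is thick, so if $P\subseteq B_{\gamma/4}(u)$ is the periodic set with $f^p(P)=P$, one finds a single $m$ and points $z_0,\dots,z_{p-1}\in B_{\gamma/4}(u)$ with $f^{m+j}(z_j)\in B_{\varepsilon}(v)$. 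Any $l\geq m$ then decomposes as $l=sp+m+j_0$, and the pseudo-orbit loops around $P$ for $sp$ steps before splicing onto the \emph{genuine} orbit of $z_{j_0}$; no periodic set at $v_0$ is needed at all. Without invoking thickness (or some equivalent quantitative consequence of weak mixing), your construction only yields $f^n(U)\cap V\neq\varnothing$ for $n$ in an unstructured set of transit times, which does not give mixing.
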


\begin{proof}
	Let $U,V\subseteq X$ be nonempty open subsets of $X.$ Find $\varepsilon>0$, $u\in U$, and $v\in V$ such that $B_{3\varepsilon}(u) \subseteq U $ and $B_{3\varepsilon}(v) \subseteq V $.
	
	Find $\delta>0$ such that $d(x,y)<\delta$ implies $d(f(x), f(y))<\varepsilon/2.$ Without loss of generality, assume $\delta<\varepsilon/2.$
	
	By the neighborhood $N$-shadowing property, we may find $0<\gamma<\delta/2$ such that for every $\gamma$-pseudo-orbit, $(x_i)_{i\in \omega}$, there exists a subset $K\subseteq X$ with $|K|\leq N$ and $diam(K)<\delta/2$ such that for all $i\in \omega,$ $d(f^i(K), x_i)<\delta/2.$
	
	Since $f$ has dense small periodic sets, choose $A\subseteq B_{\gamma/4}(u) $ such that $f^n(A)=A$ for some $n\in \mathbb{N}.$ Note $B_{\gamma/4}(u)\subseteq B_{\delta/2}(u) \subseteq B_{\varepsilon/2}(u).$
	
	Fix $a\in A$ and let $(a_k)_{k\in \omega} $ be defined by $a_k = f^{k\; (\text{mod } n)}(a). $ Note that $(a_k)_{k\in \omega}$ is a $\gamma$ pseudo-orbit. 
	
	By Proposition 1.6.2 in \cite{vries2014topological}, find $m\in \mathbb{N}$  such that for each $j\in \{0,...,n-1\}$, we may find $z_{j}\in B_{\gamma/4}(u)$ such that $f^{m+j}(z_j) \in B_{\varepsilon}(v) $. Let $l\geq m$ and notice that $l=m+j_0+sn$ for some $s\geq0$ and $j_0\in \{0,...,n-1\}.$ Define $(y_t)_{t\in \omega} $ by $$y_t = \begin{cases}
		a_t, \; t\leq sn-1\\
		f^{t-sn}(z_{j_0}), \; t\geq sn
	\end{cases}$$
	
	Note that $f(y_{sn-1}) = a_{sn} \in A. $ Since $a_{sn},z_{j_0}\in B_{\gamma/4}(u) $, then $d(a_{sn},z_{j_0})<\gamma$, so $(y_t)_{t\in \omega}$ is a $\gamma $-pseudo-orbit. 
	
	Now find $K\subseteq X$ such that $|K|\leq N,$ $diamK<\delta/2,$ and for all $t\in \omega,$ $d( y_t, f^t(K))<\delta/2.$
	
	Fix $b\in K$ such that $d(b,a)<\delta/2.$ Then for any $b'\in K,$ $$d(b',a)\leq d(b',b)+ d(a, b)< \delta/2+ \delta/2 =\delta.$$
	Hence, $K \subseteq B_{\delta}(a)\subseteq B_{2\delta}(u) \subseteq B_{\varepsilon}(u)\subseteq U. $
	
	Note that $y_l = f^{m+j}(z_{j_0})\in B_{\varepsilon}(v) $. Since $d(f^l(K), y_l)<\delta/2,$ find $x\in K\subseteq U$ such that $d(f^l(x), f^{m+j_0}(z_{j_0}))< \delta/2. $ Then $f^l(x) \in B_{\delta/2}(f^{m+j_0}(z_{j_0})) \subseteq B_{2\varepsilon}(v)  \subseteq V. $
	
	Since $l\geq m$ was arbitrary, we have for all $l\geq m,$ there exists $x\in U$ such that $f^l(x) \in V.$ Hence, $f$ is mixing.

\end{proof}

Note that in the results of \cite{anoteonaverageshadowing}, shadowing and weakly mixing together are enough to yield the required dense collection of small periodic sets. For the more general notion of shadowing we use, it is not clear whether the same is true.

\section{Closing Remarks} \label{sec:conclusion}

In Section \ref{sec:NeighborhoodSofic}, we demonstrated that there is a large class of subshifts which exhibit neighborhood $2$-shadowing, but not shadowing (i.e. neighborhood $1$-shadowing). Strikingly, while systems with $(N+1)$-shadowing, but not $N$-shadowing are known to exist \cite{n-shadowing} (and indeed, Example \ref{ex:neighborhoodVsnon} easily generalizes to show the same in subshifts), we have not been able to find a system or subshift which exhibits neighborhood $3$-shadowing, but not neighborhood $2$-shadowing. This leads us to ask the following questions.

\begin{question}
	For which $N$ are there dynamical systems with neighborhood $(N+1)$-shadowing, but not neighborhood $N$-shadowing?
\end{question}

\begin{question}
	For which $N$ are there subshifts with neighborhood $(N+1)$-shadowing, but not neighborhood $N$-shadowing?
\end{question}

Also in Section \ref{sec:NeighborhoodSofic}, we introduced the notion of a subshift of quasi-finite type and demonstrated that every subshift with this property has neighborhood $2$-shadowing. While it would be quite satisfying if the two properties were equivalent, this does not seem to be the case. In particular, in our proof of Theorem \ref{thm:semifinitetype}, the two points which shadow the pseudo-orbit both shadow the pseudo-orbit in a fairly uniform fashion. In particular, the set $\{i\in\mathbb Z:f(\sigma^i(a),x^i)<\varepsilon\}$ is a syndetic set (i.e. there is an upper bound on the difference of consecutive members), which is quite a bit stronger than is required of $2$-shadowing. 

\begin{question}
	Are there subshifts with neighborhood $2$-shadowing which are not a subshift of quasi-finite type?
\end{question}

An affirmative answer to this question naturally leads one to ask the following fairly open-ended questions.

\begin{question}
	Can being a subshift of quasi-finite type be reasonably characterized as a form of shadowing in subshifts?
\end{question}

\begin{question}
	Can neighborhood $2$-shadowing in subshifts be reasonably characterized by the language of a subshift?
\end{question}

Finally, in Section \ref{sec:generalnshadow}, we proved a partial generalization of a result of Kwietniak and Oprocha. In their paper \cite{anoteonaverageshadowing}, the hypothesis of the existence of dense small periodic sets is unnecessary as it follows from the weak mixing combined with the shadowing property. The neighborhood $N$-shadowing property seems as though it may be sufficient to yield the same result, but this remains an open question.

\begin{question}
	Is there a weakly mixing system which has the neighborhood $N$-shadowing property ($N>1$) that does not have dense small periodic sets?
	\end{question}

\bibliographystyle{plain}
\bibliography{NeighborhoodShadowingBib}

\begin{thebibliography}{10}

\bibitem{bowenshadowintro}
Rufus Bowen.
\newblock {$\omega $}-limit sets for axiom {${\rm A}$} diffeomorphisms.
\newblock {\em J. Differential Equations}, 18(2):333--339, 1975.

\bibitem{BRIAN_MEDDAUGH_RAINES_2015}
William~R. Brian, Jonathan Meddaugh, and Brian~E. Raines.
\newblock Chain transitivity and variations of the shadowing property.
\newblock {\em Ergodic Theory and Dynamical Systems}, 35(7):2044–2052, 2015.

\bibitem{n-shadowing}
D.~Carrasco-Olivera, K.~Lee, C.~A. Morales, and H.~Villavicencio.
\newblock Finite-expansivity and {$N$}-shadowing.
\newblock {\em Bull. Braz. Math. Soc. (N.S.)}, 53(1):107--126, 2022.

\bibitem{multishadowing}
Danila Cherkashin and Sergey Kryzhevich.
\newblock Weak forms of shadowing in topological dynamics.
\newblock {\em Topol. Methods Nonlinear Anal.}, 50(1):125--150, 2017.

\bibitem{subshadowing}
Dawoud~Ahmadi Dastjerdi and Maryam Hosseini.
\newblock Sub-shadowings.
\newblock {\em Nonlinear Analysis: Theory, Methods \& Applications},
  72(9-10):3759--3766, 2010.

\bibitem{vries2014topological}
Jan de~Vries.
\newblock {\em Topological dynamical systems: an introduction to the dynamics
  of continuous mappings}, volume~59.
\newblock Walter De Gruyter, 2014.

\bibitem{eirola1997limit}
Timeo Eirola, Olvai Nevanlinna, and Sergei~Yu Pilyugin.
\newblock Limit shadowing property.
\newblock {\em Numerical Functional Analysis and Optimization}, 18(1-2):75--92,
  1997.

\bibitem{expansivityanduniqueshadow}
Chris Good, Sergio Mac\'ias, Jonathan Meddaugh, Joel Mitchell, and Joe Thomas.
\newblock Expansivity and unique shadowing.
\newblock {\em Proc. Amer. Math. Soc.}, 149(2):671--685, 2021.

\bibitem{orbitalshadowingICT}
Chris Good and Jonathan Meddaugh.
\newblock Orbital shadowing, internal chain transitivity and {$\omega$}-limit
  sets.
\newblock {\em Ergodic Theory Dynam. Systems}, 38(1):143--154, 2018.

\bibitem{hammon2025specification}
Cordell Hammon, Jonathan Meddaugh, Jasmin Mohn, and Brian~E Raines.
\newblock Specification and $\omega$-chaos in non-compact systems.
\newblock {\em Journal of Mathematical Analysis and Applications},
  543(2):128944, 2025.

\bibitem{mixequivtospec}
Dominik Kwietniak, Martha {\L}acka, and Piotr Oprocha.
\newblock A panorama of specification-like properties and their consequences.
\newblock {\em Dynamics and numbers}, 669:155--186, 2016.

\bibitem{anoteonaverageshadowing}
Dominik Kwietniak and Piotr Oprocha.
\newblock A note on the average shadowing property for expansive maps.
\newblock {\em Topology Appl.}, 159(1):19--27, 2012.

\bibitem{Lind_Marcus_1995}
Douglas Lind and Brian Marcus.
\newblock {\em An Introduction to Symbolic Dynamics and Coding}.
\newblock Cambridge University Press, 1995.

\bibitem{shadowingasastructualproperty}
Jonathan Meddaugh.
\newblock Shadowing as a structural property of the space of dynamical systems.
\newblock {\em Discrete Contin. Dyn. Syst.}, 42(5):2439--2451, 2022.

\bibitem{Walters1978}
Peter Walters.
\newblock {\em On the pseudo orbit tracing property and its relationship to
  stability}, pages 231--244.
\newblock Springer Berlin Heidelberg, Berlin, Heidelberg, 1978.

\end{thebibliography}
\end{document}